\newcommand{\R}{{\mathbb{R}}}
\newcommand{\C}{{\mathbb{C}}}
\newcommand{\N}{\mathbb{ N}}
\newcommand{\Z}{\mathbb{ Z}}
\newtheorem{theorem}{Theorem}[section]
\newtheorem{lemma}[theorem]{Lemma}
\newtheorem{definition}[theorem]{Definition}
\newtheorem{corollary}[theorem]{Corollary}
\newtheorem{remark}[theorem]{Remark}
\newcommand{\ro}[1]{{#1}}
\begin{document}
\title{Embeddings of anisotropic Besov spaces into Sobolev spaces}
\author{David Bartusel}
\author{Hartmut F\"uhr}
\email{david.bartusel@matha.rwth-aachen.de}
\email{fuehr@matha.rwth-aachen.de}
\address{Lehrstuhl A f\"ur Mathematik, RWTH Aachen University, D-52056 Aachen, Germany}
\address{Lehrstuhl A f\"ur Mathematik, RWTH Aachen University, D-52056 Aachen, Germany}

\maketitle

\begin{abstract}
We study the embeddings of (homogeneous and inhomogeneous) anisotropic Besov spaces associated to an expansive matrix $A$ into Sobolev spaces, with focus on the influence of $A$ on the embedding behaviour. For a large range of parameters, we derive sharp characterizations of embeddings. 
\end{abstract}

\noindent {\small {\bf Keywords:} anisotropic Besov spaces; decomposition spaces; quasi-norms; coarse equivalence.}

\noindent{\small {\bf AMS Subject Classification:} {\em Primary:} 46E35; 42B35. {\em Secondary:} 42C40; 42C15.}

\section{Introduction}\label{introduction}

In this paper we investigate embeddings of Besov spaces associated to expansive matrices $A$ into isotropic Sobolev spaces. Recall that there exist two scales of Besov spaces for each expansive matrix $A$, letting $\dot{B}_{p,r}^\alpha(A)$ denote the \textit{homogeneous} Besov spaces, and $B_{p,r}^\alpha(A)$ the \textit{inhomogeneous} Besov spaces with integrability exponents $p,q$ and smoothness parameter $\alpha$  \cite{Bow05}. 

A systematic study of the embedding behaviour of \ro{isotropic} Besov spaces (corresponding to the matrix $A = 2 \cdot I_d$) into Sobolev spaces can be found in \cite{FV}; see also \cite{Triebel_FSI} for related results. To our knowledge, the current understanding of the relationship between anisotropic Besov spaces and Sobolev spaces is limited to fairly special settings, see e.g. \cite{MR2400793}. The paper \cite{MR1884234} studies embeddings of certain anisotropic Besov spaces in anisotropic Sobolev spaces. By contrast, we are not aware of a systematic investigation of the embedding behaviour of the anisotropic Besov spaces $B_{p,r}^\alpha(A)$ and $\dot{B}_{p,r}^\alpha(A)$ into isotropic Sobolev spaces. Note that a considerable challenge in this context comes from the great variety of scales of Besov spaces arising from different choices of anisotropic matrices $A$. Understanding the influence of these matrices on the embedding behaviour is the main motivation of this paper.


\subsection{Overview of the paper}
We aim to derive necessary and sufficient conditions for the embedding statements
\[
  \dot{B}_{p,r}^\alpha(A) \hookrightarrow W^{\ro{n,q}} \mbox{ , }
  {B}_{p,r}^\alpha(A) \hookrightarrow W^{\ro{n,q}}~.
\] 
Our paper rests on a description of anisotropic Besov spaces as decomposition spaces, established in \cite{MR4135424}, and general embedding theorems for decomposition spaces proved in \cite{FV}. We review the pertinent definitions and results in Section 2, and make some observations that will allow to reduce the discussion of general expansive matrices $A$ to certain normal forms. We then put the criteria from \cite{FV} to work, first on the homogeneous case in Section 3, and on the inhomogeneous case in Section 4. For both settings we derive necessary and sufficient criteria for embeddings, and discuss the sharpness of these criteria. It turns out that homogeneous Besov spaces show a significantly less diverse embedding behaviour than their inhomogeneous counterparts. In particular, their embedding behaviour is indifferent to the choice of the expansive matrix $A$. By contrast, the embedding behaviour of inhomogeneous Besov spaces does reflect properties of $A$.
\ro{In particular, a wider range of eigenvalues (i.e. larger anisotropy) causes more restrictive embedding properties.} Furthermore, the newly-defined notion of \textit{asymptotically norm diagonal} matrices allows a fairly sharp description of the extent to which the off-diagonal part of $A$ influences the embedding behaviour. Remark \ref{rem:summary} contains a short summary of this discussion.

\section{Preliminaries}

Throughout the paper, and unless fixed locally otherwise, we let $d\ro{{}\in\N}, n \in \mathbb{N}_{\ro{0}}$, $0< p,q,r \le \infty$ and $\alpha \in \mathbb{R}$. 

\subsection{Sobolev spaces}

Fix $1\leq q\leq\infty$. Given any $f\in L^q(\R^d)$, we let $\partial^{\alpha}f$ denote its distributional derivative. We let 
\begin{equation*}
W^{n,q}(\R^d)=\{f\in L^{q}(\R^d)~|~\partial^{\alpha}f\in L^q(\R^d)\text{ for all } \alpha\in\N_0^d\text{ with } |\alpha|\leq n\}
\end{equation*}
with the norm
\begin{equation*}
\|f\|_{W^{n,q}}=\sum\limits_{\substack{\alpha\in\N_0^d,\\|\alpha|\leq n}} \|\partial^{\alpha}f\|_{L^q}.
\end{equation*}

Given $q\in(0,1)$, we let 
\begin{equation*}
W^{n,q}(\R^d):=\overline{\left\{(\partial^{\alpha}f)_{\substack{\alpha\in\N_0^d,\\|\alpha|\leq n}}~\Bigg|~f\in C^{\infty}(\R^d)\text{ with } \partial^{\alpha}f\in L^q(\R^d)\text{ for all } \alpha\in\N_0^d\text{ with }|\alpha|\leq n\right\}}.
\end{equation*}
Here the closure is taken with respect to the product space norm 
\begin{equation*}
\prod\limits_{\substack{\alpha\in\N_0^d,\\|\alpha|\leq n}} L^q(\R^d)~\ro{.}
\end{equation*}

As noted in \cite{FV}, there are several distinct definitions of Sobolev spaces for exponents $q<1$. The definition used here follows \cite{MR374900}, and it necessitates a clarification of what we mean by an embedding theorem of anisotropic Besov spaces into Sobolev spaces, see Definition \ro{\ref{defn:embed}} below, and the remarks preceding and following it. 

Following \cite{FV}\ro{,} we define for $1\leq q\leq\infty$ the conjugate exponent $q^{\prime}$ as \ro{the unique solution of} $\frac{1}{q}+\frac{1}{q^{\prime}}=1$; for $q<1$ \ro{we} let $q^{\prime}:=\infty$. We shall also need 
\begin{equation*}
q^{\nabla}:=\min\{q,q^{\prime}\}=\begin{cases} q, &\text{if } q\leq 2,\\ q^{\prime},&\text{if } q>2.\end{cases}
\end{equation*}

We note an elementary lemma concerning the action of dilations on Sobolev spaces, which follows by (somewhat tedious) computations involving the chain rule:
\begin{lemma} \label{lem:compose_sobolev}
 Given an invertible matrix $C \in \mathbb{R}^{d \times d}$, the linear map $f \mapsto f \circ C$ defines a bounded, invertible operator on $W^{n,q}(\R^d)$. 
\end{lemma}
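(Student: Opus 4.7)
The plan rests on the chain rule. An easy induction on $|\alpha|$, exploiting that $C$ has constant entries, shows that for every smooth $f$ and every multi-index $\alpha \in \N_0^d$,
\[
\partial^\alpha (f \circ C)(x) = \sum_{|\beta| = |\alpha|} c_{\alpha,\beta}(C)\,(\partial^\beta f)(Cx),
\]
with coefficients $c_{\alpha,\beta}(C)$ polynomial in the entries of $C$. Combined with the standard $L^q$ change-of-variables
\[
\|g \circ C\|_{L^q} = |\det C|^{-1/q}\,\|g\|_{L^q} \qquad (0 < q \le \infty),
\]
this is essentially the only ingredient needed.

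For $q \ge 1$, substituting the chain-rule expansion into $\|\partial^\alpha(f \circ C)\|_{L^q}$ and summing over $|\alpha| \le n$ yields $\|f \circ C\|_{W^{n,q}} \le K(C,n,q,d)\,\|f\|_{W^{n,q}}$ for every smooth representative $f$. The extension to all of $W^{n,q}$ proceeds either by density of $C^\infty \cap W^{n,q}$ in $W^{n,q}$ (for $1 \le q < \infty$) or, uniformly in $q \ge 1$, by verifying the chain-rule identity for the distributional derivatives of $f \circ C$ via testing against $C_c^\infty(\R^d)$-functions and reducing to the smooth case.

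For $q < 1$, elements of $W^{n,q}$ are tuples $(g_\alpha)_{|\alpha| \le n}$ lying in the closure of the admissible tuples $(\partial^\alpha f)_{|\alpha| \le n}$. One defines $T_C$ directly on the product quasi-Banach space $\prod_{|\alpha| \le n} L^q(\R^d)$ by
\[
\bigl(T_C (g_\alpha)_\alpha\bigr)_\alpha := \sum_{|\beta| = |\alpha|} c_{\alpha,\beta}(C)\,(g_\beta \circ C).
\]
Since $g \mapsto g \circ C$ is bounded on $L^q$ by the change-of-variables formula, $T_C$ is bounded on $\prod L^q$; and since $T_C$ sends the generating tuple of a smooth admissible $f$ to that of $f \circ C$ (again smooth and admissible), it restricts to a bounded operator on the closure $W^{n,q}$.

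Invertibility follows in both cases from the analogous construction with $C^{-1}$: on smooth representatives (resp.\ on the generating tuples) one checks directly that $T_C \circ T_{C^{-1}} = T_{C^{-1}} \circ T_C = \mathrm{Id}$, and extends the identity by continuity and density. The only nontrivial step is the chain-rule expansion displayed above—precisely the ``tedious computation'' referenced in the statement—which poses no conceptual obstacle.
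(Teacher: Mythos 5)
Your proposal is correct and follows exactly the route the paper indicates: the paper gives no written proof of this lemma, stating only that it ``follows by (somewhat tedious) computations involving the chain rule,'' and your argument supplies precisely that computation together with the change-of-variables identity for $L^q$ and the appropriate handling of the two regimes $q\ge 1$ (distributional derivatives) and $q<1$ (the closure definition on the product space). Nothing further is needed.
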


\subsection{Anisotropic Besov spaces and their decomposition space description}

Our exposition regarding anisotropic Besov spaces  follows \cite{Bow05}. Let us start with some preliminaries and basic notions. We will use the following normalization of the Fourier transform $\mathcal{F} : {\rm L}^1(\mathbb{R}^d) \to C_0(\mathbb{R}^d)$: For all $f \in {\rm L}^1(\mathbb{R}^d)$,
\[
 \mathcal{F}(f)(\xi) = \widehat{f}(\xi) = \int_{\mathbb{R}^d} f(x) e^{- 2 \pi i \langle \xi, x \rangle} dx~.
\]
$\mathcal{S}(\mathbb{R}^d)$ denotes the space of Schwartz functions, $\mathcal{S}'(\mathbb{R}^d)$ its dual, the space of tempered distributions. As is well-known, the Fourier transform extends canonically to $\mathcal{S}'(\mathbb{R}^d)$. We let $\mathcal{P}$ denote the space of polynomials on $\mathbb{R}^d$, which can be viewed as a subspace of $\mathcal{S}(\mathbb{R}^d)$. For these definitions and basic properties of the Fourier transform, we refer to \cite{Ru_FA}.

Given an open subset $\mathcal{O} \subset \mathbb{R}^d$, we let $\mathcal{D}(\mathcal{O}) = C_c^\infty(\mathcal{O})$, the space of smooth, compactly supported functions on $\mathcal{O}$, endowed with the usual topology \cite{Ru_FA}. We let $\mathcal{D}'(\mathcal{O})$ denote its dual space\ro{, the space of distributions}.
We use ${\rm supp}(f) = \overline{f^{-1}(\mathbb{C} \setminus \{  0 \})}$ for the support of a function $f$. Given a Borel subset $C \subset \mathbb{R}^d$, $\lambda(C)$ denotes its Lebesgue measure. The cardinality of a set $X$ is denoted by $|X|$.

Given a vector $x = (x_1,\ldots,x_d)^T \in \mathbb{R}^d$, we
denote by $|x| = \left( \sum_{i=1}^d |x_i|^2 \right)^{1/2}$ its
euclidean length. Given a matrix $A \in \mathbb{R}^d$, we let $\|
A \| = \sup_{|x| = 1} |A x|$.

The definition of anisotropic Besov spaces is based on the notion of expansive matrices.
\begin{definition}
 \label{defn:expansive}
 A matrix $A \in {\rm GL}(d,\mathbb{R})$ is called {\bf expansive} if all its (possibly complex) eigenvalues $\lambda$ fulfill $|\lambda|>1$.
\end{definition}

\begin{definition}
 Let $A \in {\rm GL}(d,\mathbb{R})$ be an expansive matrix. $\psi \in \mathcal{S}(\mathbb{R}^d)$ is called {\bf $A$-wavelet} if $\widehat{\psi}$ is compactly supported away from $0$, and fulfills 
 \begin{eqnarray}
  \label{eqn:def_wv2}  & & \forall \xi \in \mathbb{R}^d \setminus \{0 \}~:~\sum_{j \in \mathbb{Z}} \left| \widehat{\psi}((A^T)^{-j} \xi) \right| = 1~.
 \end{eqnarray}

 Given a wavelet $\psi$, we define $\psi_j(x) = |{\rm det}(A)|^j \psi(A^jx)$, for $j \in \mathbb{Z}$.
 Given a wavelet $\psi$, a function $\psi_0 \in S(\mathbb{R})$ is called {\bf low-pass complement to $\psi$}, if $\widehat{\psi}_{\ro{0}}$ is compactly supported, with
 \begin{equation}
  \forall  \xi \in \mathbb{R}^d ~:~ |\widehat{\psi}_0(\xi)|^2 + \sum_{j \in \ro{\mathbb{N}}} |\widehat{\psi}((A^T)^{-j}\xi| = 1~.
 \end{equation}
 The inhomogeneous wavelet system $(\psi_j^i)_{j \in \mathbb{N}_0}$ is defined by $\psi_j^i = \psi_j$, for $j \ge 1$, and $\psi_0^i = \psi_0$.
\end{definition}

\begin{definition}
 Let $A$ denote an expansive matrix, $\alpha \in \mathbb{R}$, and $0 < r \le \infty$.  
 For $\alpha \in \mathbb{R}$, define the weight
 \[
  v_{\alpha,A} : \mathbb{Z} \to \mathbb{R}^+~,~v_{\alpha,A}(j) = |{\rm det}(A)|^{j \alpha}~.
 \]
 The space $\ell^r_{v_{\alpha,A}}(\mathbb{Z})$ is the space of sequences $(c_j)_{j \in \mathbb{Z}}$ with the property that $(\ro{v_{\alpha,A}(j)} c_j)_{j \in \mathbb{Z}} \in \ell^r\ro{(\Z)}$, endowed with the obvious (quasi-)norm.  The space $\ell^r_{v_{\alpha,A}}(\mathbb{N}_0)$ is defined analogously. Since the precise meaning can usually be inferred from the context, we will typically write $\ell^r_{v_{\alpha,A}}$ for either of the two spaces.
\end{definition}

\begin{definition}
 \label{defn:an_bes}
 Let $\alpha \in \mathbb{R}$, $0 < p,r \le \infty$.  Let $A$ be an expansive matrix, and $\psi$ an $A$-wavelet, with low-pass complement $\psi_0$.
 \begin{enumerate}
  \item[(a)]
 We define the {\bf anisotropic homogeneous Besov (quasi-) norm} by letting, for
 given $f \in \mathcal{S}'(\mathbb{R}^d)$,
 \begin{equation} \label{eqn:def_bnorm}
  \| f \|_{\dot{B}_{p,r}^\alpha(A)} = \left\| \left( \left\| f \ast \psi_j\right\|_p \right)_{j \in \mathbb{Z}}
  \right\|_{\ell^r_{v_{\alpha,A}}}.
 \end{equation}
 Here we use $\| \cdot \|_p$ to denote the (quasi-)norm of the space ${\rm L}^p(\mathbb{R}^d)$.
 We let  $\dot{B}_{p,r}^\alpha(A)$ denote the space of all tempered distributions $f$ with $\| f \|_{\dot{B}_{p,r}^\alpha(A)} < \infty$. We identify elements of $\dot{B}_{p,r}^\alpha(A)$ that only differ by a polynomial.
 \item[(b)] The {\bf anisotropic inhomogeneous Besov (quasi-) norm} is defined for
 given\linebreak $f \in \mathcal{S}'(\mathbb{R}^d)$ by
 \begin{equation} \label{eqn:def_bnorm_ih}
  \| f \|_{{B}_{p,r}^\alpha(A)} = \left\| \left( \left\| f \ast \psi_j^i\right\|_p \right)_{j \in \mathbb{N}_0}
  \right\|_{\ell^r_{v_{\alpha,A}}}.
 \end{equation}
 We let $B_{p,r}^\alpha(A)$ denote the space of all tempered distributions $f$ with $\| f \|_{B_{p,r}^\alpha(A)} < \infty$.
 \end{enumerate}
\end{definition}

The following remark collects some fundamental facts concerning anisotropic Besov spaces, established in \cite{Bow05}.
\begin{remark}
It is crucial for the well-definedness of the theory, that $A$-wavelets actually exist, for any expansive matrix, and that different choices of wavelets result in equivalent Besov \linebreak(quasi-)norms, and identical Be\ro{so}v spaces.  Furthermore, anisotropic Besov spaces are quasi-Banach spaces, and Banach spaces iff $1 \le p, r \le \infty$ holds. 
\end{remark}

Our general approach hinges on an alternative description of anisotropic Besov spaces, namely as so-called \textbf{decomposition spaces}. These scales of spaces are based on suitable coverings of the frequency domain associated to an expansive matrix. 
%
%

\begin{definition} \label{defn:induced_cover}
 Let $A$ denote an expansive matrix. Let $C \subset \mathbb{R}^d$ be open, such that $\overline{C}$ is a compact subset of $\mathbb{R}^d \setminus \{ 0 \}$, and define, for $j \in \mathbb{Z}$,
 \[
  Q_j = A^j \overline{C}~.
 \]
 If $\bigcup_{j \in \mathbb{Z}} Q_j = \mathbb{R}^d \setminus \{ 0 \}$,
 $\mathcal{Q} = (Q_j)_{j \in \mathbb{Z}}$ is called {\bf homogeneous covering induced by $A$}. An {\bf inhomogeneous covering induced by $A$} is given by the family $\mathcal{Q}_A^i = (Q_j^i)_{j \in \mathbb{N}_0}$, where\linebreak $Q_j^i = Q_j = A^j \overline{C}$ for $j \ge 1$, and $Q_0^i = \overline{C_0}$, for a relatively compact open set $C_0$ with the property that
 \[
  \bigcup_{j \in \mathbb{N}_0} Q_j^i = \mathbb{R}^d~.
 \]
 \end{definition}

\ro{Given an $A$-wavelet $\psi$ with low-pass complement $\psi_0$, common choices in Definition \ref{defn:induced_cover} are $C:={\hat{\psi}}^{-1}(\C\setminus\{0\})$ and $C_0:={\hat{\psi}}_0^{-1}(\C\setminus\{0\})$ (i.e. $\overline{C}=\operatorname{supp}(\hat{\psi})$ and $\overline{C_0}=\operatorname{supp}(\hat{\psi}_0)$). In fact, this always yields the desired covering properties.}
 
 The next result formulates the identification of Besov spaces as decomposition spaces.

 \begin{theorem} \label{thm:besov_as_decsp}
 Let $A$ denote an expansive matrix, and let $\mathcal{Q}_A$ denote a homogeneous covering induced by $A^T$. Then, up to suitable identification, one has 
 \[
 \dot{B}_{p,r}^\alpha(A) = \mathcal{D}(\mathcal{Q}_A,{\rm L}^p,\ell^r_{v_{\alpha,A}})~,
 \]
 with equivalent norms. 
 
 Similarly, if $\mathcal{Q}_A^i$ denotes an inhomogeneous covering induced by $A^T$, then
  \[
  {B}_{p,r}^\alpha(A) = \mathcal{D}(\mathcal{Q}_A^i,{\rm
 L}^p,\ell^r_{v_{\alpha,A}})~,
 \]
up to suitable identification, and with equivalent norms. Here  $v_{\alpha,A}$ denotes the restriction of the weight for the homogeneous setting to $\mathbb{N}_0$.
 \end{theorem}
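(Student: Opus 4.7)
The plan is to construct an explicit bounded admissible partition of unity (BAPU) subordinate to the covering $\mathcal{Q}_A$ such that the resulting decomposition space norm agrees, termwise in $j$, with the Besov norm of Definition \ref{defn:an_bes}. The natural candidate is supplied by the wavelet system itself: in the homogeneous case I would set $\phi_j := \widehat{\psi_j} = \widehat{\psi}\bigl((A^T)^{-j}\,\cdot\,\bigr)$, and in the inhomogeneous case $\phi_j := \widehat{\psi_j^i}$. With this choice, the identity $f \ast \psi_j = \mathcal{F}^{-1}(\phi_j \widehat{f})$ makes $\|f \ast \psi_j\|_p$ equal to the summand appearing in the decomposition space norm, so that the two (quasi-)norms coincide as soon as one knows that $(\phi_j)$ is a legitimate BAPU.

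Next I would verify the BAPU conditions. Taking $C := \{\widehat{\psi} \ne 0\}$ (and $C_0 := \{\widehat{\psi_0} \ne 0\}$ in the inhomogeneous case), the support of $\phi_j$ lies in $Q_j$, and the defining identity of an $A$-wavelet guarantees that the $(Q_j)$ cover $\mathbb{R}^d \setminus \{0\}$ (resp. $\mathbb{R}^d$ when the low-pass is included). The essential geometric fact — that the covering is admissible, i.e. the number of neighbours of any given $Q_j$ is bounded uniformly in $j$ — follows from expansiveness of $A^T$ together with the compactness of $\operatorname{supp}(\widehat{\psi})$ in $\mathbb{R}^d \setminus \{0\}$. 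The uniform $L^p$-Fourier multiplier bounds on $\phi_j$ reduce, via the change of variables $\xi \mapsto (A^T)^{-j} \xi$, to a single such bound on $\widehat{\psi}$, which holds since $\widehat{\psi}$ is Schwartz.

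The subtle and principal obstacle is the identification of the two underlying reservoirs of distributions. The decomposition space $\mathcal{D}(\mathcal{Q}_A, L^p, \ell^r_{v_{\alpha,A}})$ is built from distributions whose Fourier transforms are tested against functions supported in $\mathbb{R}^d \setminus \{0\}$, whereas $\dot{B}_{p,r}^\alpha(A)$ is defined on $\mathcal{S}'(\mathbb{R}^d)/\mathcal{P}$. The ``suitable identification'' in the statement is the canonical bijection between these two reservoirs, and the key point is that $\widehat{\mathcal{P}}$ is supported at $\{0\}$ and is therefore invisible to all localizations $\phi_j \widehat{f}$, so the quotient by $\mathcal{P}$ exactly matches the quotient implicit in the decomposition space construction. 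In the inhomogeneous case the low-pass piece $\phi_0$ captures the behaviour at the origin, so no quotienting is required and the identification is with full $\mathcal{S}'(\mathbb{R}^d)$. This bookkeeping has been carried out in detail in \cite{MR4135424}, which I would invoke to complete the argument.
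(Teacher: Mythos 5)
Your proposal follows essentially the same route as the paper's own proof: both take the wavelet system itself as the $L^p$-BAPU subordinate to the induced covering (so that the decomposition-space and Besov quasi-norms coincide termwise), verify the covering/moderateness properties from expansiveness, and defer the technical identification of the reservoir $\mathcal{Z}'(\mathcal{O})$ with $\mathcal{S}'(\mathbb{R}^d)$ (modulo polynomials in the homogeneous case) to \cite{MR4135424}. Your explicit remark that $\widehat{\mathcal{P}}$ is supported at the origin and hence invisible to the localizations $\phi_j\widehat{f}$ is a correct and slightly more detailed account of the identification than the paper gives.
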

 
 \begin{proof}
We shortly sketch the arguments and identifications necessary to establish these equalities, and refer to \cite{MR4135424} for the technical details. We only treat the homogeneous case, the argument for the inhomogeneous case is entirely analogous, and in fact somewhat easier. 

We let $\mathcal{O} = \mathbb{R}^d$. In the following, we shortly recap the definition of \textbf{space side decomposition spaces}, as employed in \cite{FV}.\\ We start out by considering the dual covering $\mathcal{Q}_A = (Q_j)_{j \in \mathbb{Z}} = \left( (A^T)^j C \right)_{j \in \mathbb{Z}}$, where $C \subset \mathcal{O}$ is an open set with compact closure in $\mathcal{O}$\ro{, such that $\bigcup_{j\in\Z} Q_j =\mathcal{O}$.} Since $A$ is expansive, these properties imply the existence of $r >0$ such that 
\begin{equation} \label{eqn:nonempty_is}
 Q_j \cap Q_\ell = \emptyset \mbox{ if } |j-\ell| \ge r~.
\end{equation} Using these properties, one easily verifies that $\mathcal{Q}_A$ is a \textbf{\ro{tight, regular semi-structured} covering} in the sense of \cite{FV} with matrices $T_j = (A^T)^j$ and vectors $b_j = 0$\ro{, and thus an $L^p$-decomposition covering.} In addition, (\ref{eqn:nonempty_is}) implies that the weight $v_{\alpha,A}$ is $\mathcal{Q}_A$-moderate in the sense of \cite[Definition 2.9]{FV}. 

Picking an $A$\ro{-}wavelet $\psi$ with support $\widehat{\psi} \subset C$, one easily verifies that $\varphi_j = \widehat{\psi}_j$ defines an \textbf{$L^p$-BAPU subordinate to $\mathcal{Q}_A$} in the sense of \cite[Definition 2.3]{FV}.

The final ingredient of the definition of the decomposition space $ \mathcal{D}(\mathcal{Q}_A,{\rm L}^p,\ell^r_{v_{\alpha,A}})$ is the associated \textbf{reservoir space}. 
We define 
\[
 \ro{\mathcal{Z}(\mathcal{O}) = \{\mathcal{F}(f)~:~ f\in\mathcal{D}(\mathcal{O}) \}~,}
\] as the \ro{image} under Fourier transform of the space of test functions on $\mathcal{O}$, given by\linebreak $\mathcal{D}(\mathcal{O}) = C_c^\infty(\mathcal{O})$, endowed with the standard topology. We \ro{endow $\mathcal{Z}(\mathcal{O})$ with the unique topology that makes $\mathcal{F}$ a homeomorphism}, and let $\mathcal{Z}'(\mathcal{O})$ denote the associated dual space of continuous linear functionals on $\mathcal{Z}(\mathcal{O})$. There is also a well-defined Fourier transform on the dual spaces; more precisely, we have 
\[
 \mathcal{F} : \mathcal{Z}'(\mathcal{O}) \to \mathcal{D}'(\mathcal{O})~,~f \mapsto f \circ \ro{\mathcal{F}}~.
\]
With these definitions in place, the space-side decomposition spaces are defined as
\[
 \mathcal{D}(\mathcal{Q}_A,{\rm L}^p,\ell^r_{v_{\alpha,A}}) = \left\{ f \in \mathcal{Z}'(\mathcal{O}) ~:~
 \left( \| \mathcal{F}^{-1}(\varphi_j \widehat{f}) \|_p \right)_{j \in \mathbb{Z}} \in \ell^r_{v_{\alpha,A}}~ \right\}
\] with the obvious choice of quasi-norm \cite[Definition 2.12]{FV}. Now a comparison of this definition with our Definition \ref{defn:an_bes} shows that the norms are identically defined, but on different base spaces, and pending proper interpretations of the Fourier transforms.
The remaining steps in the identification therefore consist in establishing the following: 
\begin{enumerate}
 \item Every $f \in \mathcal{S}'(\mathbb{R}^d)$ satisfying $\| f \|_{\dot{B}_{p,r}^\alpha} <  \infty$ induces an element $\tilde{f} \in \mathcal{Z}'(\mathcal{O})$, and $\mathcal{F}(\tilde{f})$ viewed as element of $\mathcal{D}'(\mathcal{O})$ coincides with the Fourier transform of $f$ as tempered distribution. 
 \item Conversely, every $f \in \mathcal{Z}'(\mathcal{O})$ satisfying $\| f \|_{\mathcal{D}(\mathcal{Q}_A,{\rm L}^p,\ell^r_{v_{\alpha,A}})}< \infty$ induces a tempered distribution $\tilde{f}$, with $\mathcal{F}(f) \in \mathcal{D}'(\mathcal{O})$ coinciding with the Fourier transform of $\tilde{f}$ as tempered distribution.
\end{enumerate}
For the corroboration of these results we refer to the proof of Theorem 5.6 in \cite{MR4135424}.
\end{proof}

\begin{remark} \label{rem:embed}
We next give a formal definition of embeddings of anisotropic Besov spaces into Sobolev spaces. Informally, the elements of the Sobolev space $W^{n,q}$ are functions with the properties that their derivatives of order up to $n$ are $q$-integrable.

Hence for $1 \le  q \le \infty$ the canonical inclusion $L^q(\mathbb{R}^d) \subset \mathcal{S}'(\mathbb{R}^d)$ gives a well-defined meaning to the statement $f \in L^q(\mathbb{R}^d)$, for a tempered distribution $f$. 
Furthermore, for $1 \le q < \infty$, the only polynomial contained in $L^q(\R^d)$ is the zero polynomial, which implies that the statement $f \in L^q(\R^d)$ is even well-defined modulo polynomials.

The consequence of these observations is that the reservoir spaces used in the definition of homogeneous and inhomogeneous anisotropic Besov spaces allow to define the embedding statements 
\[
 \dot{B}_{p,r}^\alpha(A) \hookrightarrow W^{\ro{n,q}}~\mbox{ resp. }~ B_{p,r}^\alpha(A) \hookrightarrow W^{\ro{n,q}}
\] as simple inclusions, whenever $1 \le q \le \infty$.

Note however that this point of view is not available only for $0<q<1$, since $L^q(\mathbb{R}^d)$ has no canonical embedding into $S'(\mathbb{R}^d)$. 
Therefore, this case requires a somewhat different approach. The following definition allows a unified treatment of all $0<q\le \infty$. 
\end{remark}
 
\begin{definition}
 \label{defn:embed}
 Let $A$ denote an expansive matrix, and let $\psi$ denote an $A$-wavelet. We write 
 \[
  \dot{B}_{p,r}^\alpha(A) \hookrightarrow W^{\ro{n,q}}
 \] if for all $f \in \dot{B}_{p,r}^\alpha(A)$ and all multiindices $\beta \in \mathbb{N}_0^d$ satisfying $|\beta|\le n$ the operator
 \[
  \partial_\ast^\beta : \dot{B}_{p,r}^\alpha(A) \to {\rm L}^q(\mathbb{R}^d)~,~\partial_\ast^\beta(f) = \sum_{j \in \mathbb{Z}} \partial^\beta(f \ast \psi_j)
 \] is well-defined and bounded, with unconditional (quasi-)norm convergence of the sum on the right hand side. The embedding
 \[
   B_{p,r}^\alpha(A) \hookrightarrow W^{\ro{n,q}}
 \] is defined analogously.
\end{definition}

\ro{Whenever the embedding holds, we obtain canonical linear maps  $f \mapsto \partial_{\ast}^{0} f\in W^{n,q}(\R^d)$ in the case $q\geq 1$, and $f \mapsto (\partial_{\ast}^{\beta}f)_{|\beta|\leq n}\in W^{n,q}(\R^d)$ in the case $0<q<1$, which can be viewed as concrete implementations of the embedding. Moreover, note that the partial derivatives in the series expansions on the right-hand side may be taken in a classical sense, since $f\ast\psi_j$ is a smooth function.}
 
As a further application of the decomposition space description of anisotropic Besov spaces, we cite a classification result characterizing the influence of the expansive matrix on the Besov spaces. This result will allow important structural assumptions on $A$ that signficantly simplify our reasoning. 

\begin{definition}
 A matrix $A$ is called \textbf{in expansive normal form} if every eigenvalue of $A$ is $>1$ (in particular real), and ${\rm det}(A) = 2$. 
 
 We call a matrix \textbf{in expansive Jordan normal form} if it is simultaneously in Jordan normal form and \ro{in} expansive normal form. 
\end{definition}

Note that any matrix $A'$ that is conjugate to a matrix $A$ in expansive normal form is itself in expansive normal form. In particular, if $A'$ is the Jordan normal form of a matrix $A$ in expansive normal form, than $A'$ is in expansive \ro{Jordan} normal form. 

The main purpose of the expansive normal form of a matrix is the following result classifying expansive matrices with respect to the associated homogeneous anisotropic spaces. It combines Lemma 7.7, Theorem 7.9 and Remark 7.11. of \cite{MR4135424}. 
\begin{theorem}\label{thm:expnf}
For every expansive matrix $A$ there exists a unique matrix $B$ in expansive normal form such that  $\dot{B}_{p,r}^\alpha(A) = \dot{B}_{p,r}^\alpha (B)$ holds, for all $0 \ro{{}< p,r} \le \infty$ \ro{and $\alpha\in\R$}. This also implies \ro{$B_{p,r}^{\alpha}(A)=B_{p,r}^{\alpha}(B)$}.
\end{theorem}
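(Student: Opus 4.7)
The plan is to reduce the statement to the decomposition space description of Theorem \ref{thm:besov_as_decsp}. Under this reduction, the equality $\dot{B}_{p,r}^{\alpha}(A)=\dot{B}_{p,r}^{\alpha}(B)$ for all admissible $p,r,\alpha$ translates into equivalence of the indexed pairs (covering, weight) $(\mathcal{Q}_{A^T},v_{\alpha,A})$ and $(\mathcal{Q}_{B^T},v_{\alpha,B})$ in the sense of decomposition space theory \cite{FV}. The task then splits into existence of a representative $B$ in expansive normal form, and its uniqueness.

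For existence, I would proceed from the real Jordan form of $A$ in two passes. First, decompose each real Jordan block corresponding to an eigenvalue $\lambda$ of $A$ as the product of the positive scalar $|\lambda|$, a rotation/reflection piece, and a unipotent piece. The rotations/reflections only permute angular coordinates within each dyadic shell $(A^T)^{j+1}\overline{C}\setminus(A^T)^j\overline{C}$ and introduce a uniformly bounded distortion into the covering. Replacing these angular factors by the identity yields an intermediate expansive matrix $\tilde{B}$ whose spectrum consists of real eigenvalues $>1$, with $\mathcal{Q}_{\tilde{B}^T}$ equivalent to $\mathcal{Q}_{A^T}$ in the decomposition space sense. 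Second, set $B:=\tilde{B}^{1/s}$ with $s:=\log_2|\det\tilde{B}|$; this real power is well-defined since $\tilde{B}$ has positive spectrum, and yields $|\det B|=2$ with spectrum still above $1$. The covering $\mathcal{Q}_{B^T}$ is a reindexed version of $\mathcal{Q}_{\tilde{B}^T}$, and the relation $|\det B|^{j\alpha}=|\det\tilde{B}|^{j\alpha/s}$ ensures that $v_{\alpha,B}$ matches $v_{\alpha,\tilde{B}}$ under the reindexing.

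For uniqueness, the eigenvalues and Jordan structure of $B$ in expansive normal form are intrinsic invariants of the decomposition space, recoverable from the asymptotic shape of the sets $B^j\overline{C}$ as $j\to\infty$. Hence two distinct matrices in expansive normal form yield inequivalent coverings and therefore Besov spaces that differ for some choice of parameters. The inhomogeneous statement $B_{p,r}^{\alpha}(A)=B_{p,r}^{\alpha}(B)$ then follows at once from the homogeneous case, since an inhomogeneous covering differs from its homogeneous counterpart only in the single low-pass patch covering the origin, which can be chosen identically for both matrices.

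The main obstacle lies in rigorously justifying the covering equivalences in the first pass of existence, in particular the uniform-in-$j$ comparison of $A^j v$ with $\tilde{B}^j v$ (and its real-power analogue) for $v$ ranging over the frequency cells. This relies on a polar-type decomposition of expansive matrices and careful book-keeping of the angular perturbations. These technical steps are precisely the content of Lemma 7.7 and Theorem 7.9 of \cite{MR4135424}, which the theorem directly invokes.
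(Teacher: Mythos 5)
The paper offers no proof of this theorem at all; it is imported verbatim as a combination of Lemma~7.7, Theorem~7.9 and Remark~7.11 of \cite{MR4135424}, and your proposal likewise defers the entire technical core (the covering equivalences under removal of the rotational parts, and the uniqueness of the normal form) to exactly those results, so the two approaches coincide. Your added sketch is a fair outline of the cited construction, with one imprecision worth flagging: for $B=\tilde{B}^{1/s}$ with $1/s\notin\mathbb{Z}$ the covering $\mathcal{Q}_{B^T}$ is \emph{not} a reindexing of $\mathcal{Q}_{\tilde{B}^T}$, only a weakly equivalent covering sampled along the one-parameter group $t\mapsto\tilde{B}^t$, which is precisely why \cite{MR4135424} works with the coarse equivalence of quasi-norms $\rho_B\asymp\rho_{\tilde{B}}^{1/s}$ rather than with the coverings directly.
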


The following lemma studies the composition action of invertible matrices on elements from an anisotropic Besov space. Note that this action does not necessarily leave such a space invariant. 

\begin{lemma} \label{lem:compose_Besov}
 Let $A \in \R^{d \times d}$ denote an expansive matrix and $C \in \R^{d \times d}$ be invertible. Then the composition map $f \mapsto f \circ C$ defines isomorphisms $\dot{B}_{p,r}^\alpha(CAC^{-1}) \to \dot{B}_{p,r}^\alpha(A)$ and\linebreak $B_{p,r}^\alpha(CAC^{-1}) \to B_{p,r}^\alpha(A)$.
\end{lemma}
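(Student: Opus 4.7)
The plan is to transfer the $A$-wavelet system to an $A'$-wavelet system (where $A' := CAC^{-1}$) via the same composition by $C$, so that $(f \circ C) \ast \psi_j$ becomes, after a change of variables, the corresponding piece of the $A'$-decomposition of $f$.

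First, I would fix an $A$-wavelet $\psi$ (together with a low-pass complement $\psi_0$ in the inhomogeneous setting) and introduce the candidate $A'$-wavelet
\[
 \tilde{\psi}(y) := |\det C|^{-1}\,\psi(C^{-1}y), \qquad \tilde{\psi}_0(y) := |\det C|^{-1}\,\psi_0(C^{-1}y),
\]
so that $\widehat{\tilde{\psi}}(\eta) = \hat{\psi}(C^{T}\eta)$ and similarly for $\tilde{\psi}_0$. The relation $A' = CAC^{-1}$ yields the commutation $C^{T}(A'^{T})^{-j} = (A^{T})^{-j}C^{T}$, from which one verifies immediately that $\tilde{\psi}$ is an $A'$-wavelet and $\tilde{\psi}_0$ a low-pass complement to $\tilde{\psi}$ with respect to $A'$. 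The prefactor $|\det C|^{-1}$ is chosen precisely so that the partition-of-unity condition evaluates to $1$ rather than to a scalar multiple of it.

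The heart of the argument is then the identity
\[
 (f \circ C) \ast \psi_j = (f \ast \tilde{\psi}'_j) \circ C \qquad (f \in \mathcal{S}'(\R^d),\ j \in \Z),
\]
where $\tilde{\psi}'_j(z) := |\det A'|^{j}\,\tilde{\psi}(A'^{j}z)$. This is obtained from the change of variables $u = Cy$ inside the convolution, together with the pointwise equality $\psi_j(C^{-1}z) = |\det C|\,\tilde{\psi}'_j(z)$, which rests on the commutation $A^{j}C^{-1} = C^{-1}A'^{j}$; the two Jacobian-type factors cancel exactly. The analogous formula for $\psi_0$ and $\tilde{\psi}_0$ requires no further input.

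Taking $L^{p}$-norms yields $\|(f\circ C)\ast \psi_j\|_{p} = |\det C|^{-1/p}\,\|f\ast\tilde{\psi}'_j\|_{p}$, and since $|\det A|=|\det A'|$ the weights $v_{\alpha,A}$ and $v_{\alpha,A'}$ coincide. Taking the $\ell^{r}_{v_{\alpha,A}}$-quasi-norm in $j$ and invoking the wavelet-independence of the Besov (quasi-)norm gives
\[
 \|f \circ C\|_{\dot{B}^{\alpha}_{p,r}(A)} \asymp |\det C|^{-1/p}\,\|f\|_{\dot{B}^{\alpha}_{p,r}(A')}~.
\]
Applying the same reasoning to the pair $(A, C^{-1})$, using $A = C^{-1}A'C$, yields the reverse bound; hence $f \mapsto f \circ C$ is a bicontinuous isomorphism $\dot{B}^{\alpha}_{p,r}(A') \to \dot{B}^{\alpha}_{p,r}(A)$, and the same argument (now with $\tilde{\psi}_0$ handling the $j=0$ component) takes care of the inhomogeneous case. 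The main obstacle is purely bookkeeping in the key identity above: one must carry out the two changes of variables with all Jacobian constants tracked so that they cancel without leaving spurious factors. A minor preliminary point is that $f \mapsto f \circ C$ is well-defined on the quotient of tempered distributions modulo polynomials needed for the homogeneous setting, which is clear since composition with $C$ preserves polynomials.
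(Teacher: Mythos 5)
Your proposal is correct and follows essentially the same route as the paper's proof: transport the $A$-wavelet to a $CAC^{-1}$-wavelet by composition with $C^{-1}$, use the identity $(f\circ C)\ast\varphi=|\det C|^{-1}\bigl(f\ast(\varphi\circ C^{-1})\bigr)\circ C$, and compare the resulting weighted $\ell^r$ norms, noting $|\det(CAC^{-1})|=|\det A|$. If anything, you are slightly more careful than the paper on one point: you normalize the transported wavelet by $|\det C|^{-1}$ so that the partition-of-unity condition sums exactly to $1$ (the paper's unnormalized $\psi\circ C^{-1}$ satisfies it only up to the constant $|\det C|$), which changes nothing for the norm equivalence but tidies up the bookkeeping.
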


\begin{proof}
 We shortly sketch the argument. We first note that the dilation operator $D_C(f) = f \circ D$, initially defined on functions, extends naturally to tempered distributions. 
 
 In order to verify the isomorphism property  $\dot{B}_{p,r}^\alpha(CAC^{-1}) \to \dot{B}_{p,r}^\alpha(A)$, we fix an $A$-wavelet $\psi$. Then a quick calculation using the behaviour of the Fourier transform under dilation establishes that $\psi \circ C^{-1}$ is a $CAC^{-1}$-wavelet. The associated family of dilated wavelets is then given by
 \[
  \psi_{j,C} = |{\rm det}(CAC^{-1})|^j (\psi \circ C^{-1}) \circ (CAC^{-1})^j = |{\rm det}(A)|^j \psi \circ A^j \circ C^{-1} = \psi_j \circ C^{-1} 
 \]

 Furthermore, one easily verifies for $f \in S'(\mathbb{R}^d))$ and $\varphi \in \mathcal{S}(\mathbb{R}^d)$ that 
 \[
  (f \circ C)\ast \varphi = |{\rm det}(C)|^{-1} \left( f \ast (\varphi \circ C^{-1}) \right) \circ C~. 
 \]
 
This entails
\begin{eqnarray*}
 \| (f \circ C) \ast \psi_j \|_p & = &    |{\rm det}(C)|^{-1} \left\| (f \ast ( \psi_j \circ C^{-1} ))\circ C \right\|_p \\ & = & |{\rm det}(C)|^{-1-1/p} \left\| f \ast (\psi_j \circ C^{-1}) \right\|_p = |{\rm det}(C)|^{-1-1/p}  \left\| f \ast \psi_{j,C} \right\|_p~.
\end{eqnarray*}
Taking weighted $\ell^r$ norms on both sides gives $\| f \circ C \|_{\dot{B}_{p,r}^\alpha(A)} =  |{\rm det}(C)|^{-1-1/p}
\| f \|_{\dot{B}_{p,r}^\alpha(CAC^{-1})}$, which is the desired result. 

The proof for inhomogeneous spaces is completely analogous.  
\end{proof}

Combining the behaviours of the respective spaces under dilation with the properties of the expansive normal form allows to reduce the general embedding problem to a class of special cases, namely to matrices that are simultaneously in Jordan and expansive normal form. 
\begin{corollary}
 Let $A$ denote an expansive matrix, $A'$ the expansive normal form of $A$ \ro{(from Theorem \ref{thm:expnf})}, and $A''$ the Jordan normal form of $A'$. Then 
 \begin{equation} \dot{B}_{p,r}^\alpha (A) \hookrightarrow W^{n,q}(\mathbb{R}^d) ~\Longleftrightarrow~ \dot{B}_{p,r}^\alpha  (A'') \hookrightarrow W^{n,q}(\mathbb{R}^d)~.
 \end{equation}
 The analogous equivalence holds for the inhomogeneous Besov spaces.  
\end{corollary}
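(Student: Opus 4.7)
The proof is a two-step reduction. First, Theorem \ref{thm:expnf} gives $\dot{B}_{p,r}^\alpha(A) = \dot{B}_{p,r}^\alpha(A')$ and $B_{p,r}^\alpha(A) = B_{p,r}^\alpha(A')$ as quasi-normed spaces. Since the operator $\partial_*^\beta$ in Definition \ref{defn:embed} reassembles the $\beta$-th distributional derivative of $f$ (modulo polynomials, in the homogeneous case) from its band-limited pieces, and since the $L^q$-convergence statement is intrinsic to the underlying distribution rather than to the particular expansive matrix chosen, this identification of Besov spaces transfers the embedding property between $A$ and $A'$. It therefore remains to prove the equivalence for $A'$ and $A''$.

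Because $A'$ is in expansive normal form, all of its eigenvalues are real, and so there exists an invertible $C \in \R^{d \times d}$ with $A' = C A'' C^{-1}$. By Lemma \ref{lem:compose_Besov}, the composition map $T(f) = f \circ C$ is a topological isomorphism $\dot{B}_{p,r}^\alpha(A') \to \dot{B}_{p,r}^\alpha(A'')$ (and similarly for the inhomogeneous scale); by Lemma \ref{lem:compose_sobolev}, $T$ is also an isomorphism of $W^{n,q}$. The plan is to show that $T$ intertwines the defining series of Definition \ref{defn:embed}, up to an invertible linear reparametrization of partial derivatives.

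To carry this out, fix an $A''$-wavelet $\psi$; the proof of Lemma \ref{lem:compose_Besov} identifies $\tilde{\psi} := \psi \circ C^{-1}$ as an $A'$-wavelet with dilates $\tilde{\psi}_j = \psi_j \circ C^{-1}$. For $g \in \dot{B}_{p,r}^\alpha(A')$ and $h := g \circ C \in \dot{B}_{p,r}^\alpha(A'')$, the convolution identity established in the proof of that lemma yields
\[
g \ast \tilde{\psi}_j = |\det C|\,(h \ast \psi_j) \circ C^{-1}.
\]
The chain rule then expresses $\partial^\beta(g \ast \tilde{\psi}_j)$ as a fixed linear combination of terms $(\partial^\gamma(h \ast \psi_j)) \circ C^{-1}$ with $|\gamma| = |\beta|$, with coefficients depending only on $C$ and $\beta$. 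Since composition with $C^{-1}$ is a bounded isomorphism of $L^q$, unconditional $L^q$-convergence of $\sum_j \partial^\gamma(h \ast \psi_j)$ for all $|\gamma| \le n$ is equivalent to the corresponding convergence of $\sum_j \partial^\beta(g \ast \tilde{\psi}_j)$ for all $|\beta| \le n$, with comparable operator norms. The reverse implication follows symmetrically on swapping the roles of $A'$ and $A''$ (that is, replacing $C$ by $C^{-1}$). The inhomogeneous case is handled identically, summing over $\mathbb{N}_0$ and incorporating the low-pass complement in the obvious analogous way.

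The main bookkeeping step is the chain-rule calculation: a linear change of variables maps $\partial^\beta$ to a linear combination of partial derivatives of the same total order $|\gamma|=|\beta|$, so the constraint $|\beta|\le n$ is preserved under conjugation by $C$. No new estimates beyond those contained in Lemmas \ref{lem:compose_sobolev} and \ref{lem:compose_Besov} are required; once the wavelet and the convolution identity are properly lined up, the argument is a formal manipulation.
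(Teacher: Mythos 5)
Your argument is correct and follows exactly the route the paper intends: Theorem \ref{thm:expnf} handles the passage from $A$ to $A'$, and the conjugation $A' = CA''C^{-1}$ combined with Lemma \ref{lem:compose_Besov} and Lemma \ref{lem:compose_sobolev} handles the passage from $A'$ to $A''$. The paper leaves the corollary without an explicit proof, so your chain-rule bookkeeping (showing that $\partial^\beta$ composed with the dilation is a fixed linear combination of same-order derivatives, hence compatible with the series in Definition \ref{defn:embed} for every $0<q\le\infty$) simply spells out the details the authors suppress.
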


\ro{For the remainder of the paper, we will thus assume that the generating matrix $A$ is in expansive Jordan normal form.}

The following theorem spells out the main result of \cite{FV} for the homogeneous and inhomogeneous coverings induced by (the transpose of) an expansive matrix, and thus prepares our subsequent characterizations of the embeddings. The theorem translates the embedding problem to a summability property of a suitably defined sequence, that is associated to the dual covering.

\begin{theorem} \label{thm:FV_embed_Besov}
 Given an expansive matrix $A$ and $\ro{0<t\leq\infty}$, define the sequence \linebreak$a^{(t)} = (a_j^{(t)})_{j \in \mathbb{Z}}$ of positive real numbers given by 
 \begin{equation}
  a_j^{(t)} = |{\rm det}(A)|^{j(1/p-1/t-\alpha)} \cdot (1+\| A^j \|^n) ~;
 \end{equation} for $t=\infty$ we use the convention $1/\infty = 0$. We let $a_+^{(t)} = (a_j^{(t)})_{j \ge 1}$. 

 \begin{enumerate}
  \item[(a)] Assume that $p \le q$ as well as $a^{(q)} \in \ell^{q^{\nabla}\cdot(\nicefrac{r}{q^{\nabla}})^{\prime}}(\mathbb{Z})$. Then 
  \begin{equation} \label{eqn:embed_Bdot_FV} \dot{B}_{p,r}^\alpha(A) \hookrightarrow W^{n,q}(\mathbb{R}^d)~.
   \end{equation}
 Conversely, (\ref{eqn:embed_Bdot_FV}) entails $p \le q$, as well as $a^{(q)} \in \ell^{q\cdot (\nicefrac{r}{q})'}(\mathbb{Z})$. Additional necessary conditions for (\ref{eqn:embed_Bdot_FV}) are
 \[ a^{(q)} \in \ell^{r'}(\mathbb{Z}) \mbox{ for } q=\infty~ \]
 and
 \[ 
  a^{(p)}_+ \in \ell^{2 \cdot(r/2)'}(\mathbb{N}_0)\ro{\text{ for } q<\infty}.  
 \]
 \item[(b)] Assume that $p \le q$ as well as $a_+^{(q)} \in \ell^{q^{\nabla}\cdot(\nicefrac{r}{q^{\nabla}})^{\prime}}(\mathbb{N}_{\ro{0}})$. Then 
  \begin{equation} \label{eqn:embed_B_FV} B_{p,r}^\alpha(A) \hookrightarrow W^{n,q}(\mathbb{R}^d)~.
   \end{equation}
   Conversely, (\ref{eqn:embed_B_FV}) entails $p \le q$ as well as $a_+^{(q)} \in \ell^{q\cdot (\nicefrac{r}{q})'}(\mathbb{N})$.
   Additional necessary conditions for (\ref{eqn:embed_B_FV}) are
	\[
	 \ro{a_+^{(q)} \in \ell^{r^{\prime}}(\N_0)\text{ for } q=\infty~,}
	 \]
	\[
	a_+^{(p)} \in \ell^{2\cdot (r/2)'}(\mathbb{N}_{\ro{0}}) \mbox{ for } q \ro{{}<{}} \infty~,
	\]
	and 
   	\[
    	a_+^{(2)} \in \ell^{2\cdot (r/2)'}(\mathbb{N}_{\ro{0}}) \mbox{ for } q \in [2,\infty)~.
   	\]
 \end{enumerate}
\end{theorem}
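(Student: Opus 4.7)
The plan is to deduce this theorem as a specialization of the general embedding results for decomposition spaces proved in \cite{FV}, using the Besov-as-decomposition-space identifications from Theorem \ref{thm:besov_as_decsp}. Once $\dot{B}_{p,r}^{\alpha}(A)$ and $B_{p,r}^{\alpha}(A)$ are realized as $\mathcal{D}(\mathcal{Q}_A,{\rm L}^p,\ell^r_{v_{\alpha,A}})$ and $\mathcal{D}(\mathcal{Q}_A^i,{\rm L}^p,\ell^r_{v_{\alpha,A}})$ respectively, the question of embedding into $W^{n,q}(\R^d)$ is a question about decomposition space embeddings into Sobolev spaces, for which \cite{FV} supplies both sufficient and necessary criteria. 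The task thus reduces to identifying the generating data of our coverings and reading off what the sequences appearing in the \cite{FV} criteria look like in this case.

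Concretely, as noted in the proof of Theorem \ref{thm:besov_as_decsp}, the covering $\mathcal{Q}_A$ is tight, regular and semi-structured with matrices $T_j = (A^T)^j$ and translation vectors $b_j = 0$, and the weight $v_{\alpha,A}(j)=|{\rm det}(A)|^{j\alpha}$ is $\mathcal{Q}_A$-moderate. The sequences governing embeddings into $W^{n,q}$ in \cite{FV} have the form $|{\rm det}(T_j)|^{1/p-1/t}(1+\|T_j\|^n)\,v_{\alpha,A}(j)^{-1}$. Using $|{\rm det}((A^T)^j)|=|{\rm det}(A)|^j$ and $\|(A^T)^j\|=\|A^j\|$, this collapses to
\[
 |{\rm det}(A)|^{j(1/p-1/t-\alpha)}\bigl(1+\|A^j\|^n\bigr) = a_j^{(t)},
\]
which is precisely the sequence in the statement. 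The index set is $\Z$ in the homogeneous case and $\N_0$ in the inhomogeneous case, yielding $a^{(t)}$ and $a_+^{(t)}$, respectively.

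With these identifications fixed, the sufficient condition $a^{(q)}\in\ell^{q^{\nabla}\cdot(r/q^{\nabla})^{\prime}}$ and the necessary condition $a^{(q)}\in\ell^{q\cdot(r/q)'}$ follow directly from the corresponding upper and lower bounds in \cite{FV} for the decomposition space embedding into $W^{n,q}$, together with the fact that $p\leq q$ is always necessary. The additional necessary conditions (the case $q=\infty$ giving an $\ell^{r'}$ estimate, and the $a^{(p)}$ and $a^{(2)}$ side conditions for $q<\infty$ and $q\in[2,\infty)$) are likewise transcriptions of specific necessity statements in \cite{FV} arising from testing the embedding against suitable atoms supported in individual pieces of the covering; the mechanism is to insert band-limited functions localized on a single $Q_j$ and compare Besov and Sobolev norms, which in the Sobolev norm produces factors depending on $\|A^j\|^n$, and in the Besov norm factors depending on $|{\rm det}(A)|^{j(1/p-\alpha)}$ and the weight $v_{\alpha,A}(j)$.

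I expect the main obstacle to be purely bookkeeping: carefully matching the notational conventions of \cite{FV} (exponents $q^{\nabla}$, the appearance of conjugate-type exponents $(r/q)'$, the distinction between tight vs.\ merely structured coverings, and the role of the BAPU) to our setting, and verifying the side hypotheses (moderateness of the weight, existence of an $L^p$-BAPU) just once; all of these have already been carried out in the proof of Theorem \ref{thm:besov_as_decsp}. The inhomogeneous part (b) is then handled by the same argument applied to $\mathcal{Q}_A^i$, where the index set $\N_0$ replaces $\Z$ throughout. No genuinely new analytic content is required beyond the results already collected.
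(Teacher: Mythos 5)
Your proposal is correct and follows essentially the same route as the paper: both reduce the statement to \cite[Corollary 5.2]{FV} via the decomposition space identification of Theorem \ref{thm:besov_as_decsp}, compute the covering weight with $T_j=(A^T)^j$, $b_j=0$, and use invariance of determinant and norm under transposition to arrive at $a_j^{(t)}$. The only detail the paper adds that you omit is the observation that the conditions involving the one-sided sequences $a_+^{(t)}$ in both parts follow from \cite[Corollary 5.2c)]{FV} because $\sup_{j\geq 0}\|A^{-j}\|<\infty$, but this is minor bookkeeping.
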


\begin{proof}
 We recall the identifications from Theorem  \ref{thm:besov_as_decsp}, 
 \[
   \dot{B}_{p,r}^\alpha(A) = \mathcal{D}(\mathcal{Q}_A,{\rm L}^p,\ell^r_{v_{\alpha,A}})
 \]
 and 
 \[
  {B}_{p,r}^\alpha(A) = \mathcal{D}(\mathcal{Q}_A^i,{\rm
 L}^p,\ell^r_{v_{\alpha,A}})
 \] based on the homogeneous and inhomogeneous coverings $\mathcal{Q}_A$ and $\mathcal{Q}_A^i$, induced by $A^T$, respectively\ro{, i.e.} $\mathcal{Q}_A = ((A^T)^j \overline{C})_{j \in \mathbb{Z}}$, as well as
 $\mathcal{Q}_A^i = ((A^T)^j \overline{C})_{j \in \mathbb{N}} \cup \{ \overline{C_0} \}$.

 We intend to apply \cite[Corollary 5.2]{FV} with 
 \[
  u(j) = v_{\alpha,A}(j) =  |{\rm det}(A)|^{j \alpha}~.
 \] The covering-dependent weight $w^{(t)}$ from the cited result is computed as
 \[
  w^{(t)}(j) = |{\rm det}(A^j)|^{\frac{1}{p}-\frac{1}{t}} (1+\| A^j \|^{\ro{n}})~,
 \] where we used that both determinant and norms are invariant under transposition. With these objects, and some straightforward simplifications, the conditions of \cite[Corollary 5.2]{FV} are seen to specialize to the lists of conditions in (a) and (b).
 
 Note in particular that the conditions involving sequences of the type $a_+^{(t)}$, which occur both in parts (a) and (b), can be derived from  \cite[Corollary 5.2\ro{c)}]{FV} by observing that \linebreak$\sup_{j \ro{\geq} 0} \| A^{\ro{-j}} \| < \infty$. 
\end{proof}

\begin{remark}\label{rem:equiv}
 As already noted, $0<q\le 2$ entails $q = q^\nabla$, and the necessary and sufficient conditions of the theorem coincide.
 \ro{Additionally, for $q=\infty$, we obtain $q^{\nabla}\cdot\left(\nicefrac{r}{q^{\nabla}}\right)^{\prime}=r^{\prime}$, which yields equivalent characterizations as well.}
 Further comments concerning the gaps between these conditions will be found below. 
\end{remark}

\section{Embeddings of homogeneous Besov spaces}

Throughout the remainder of this paper, and unless otherwise specified, $A$ will always refer to an expansive matrix, and we shall use $0 < p,r,q \le \infty$, $n \in \mathbb{N}_0$ and $\alpha \in \mathbb{R}$. Furthermore, we will use 
\begin{equation} \label{eqn:def_nst}
 \ro{n^* =\alpha+\frac{1}{q}-\frac{1}{p}~,}
\end{equation} which turns out to be a very useful quantity, as explained in the next remark

\begin{remark} \label{rem:recurrent_themes}
\ro{Many of the} necessary and sufficient conditions of Theorem \ref{thm:FV_embed_Besov} for the embedding statements 
\[ \dot{B}_{p,r}^\alpha(A) \hookrightarrow W^{n,q}(\mathbb{R}^d)~,~ B_{p,r}^\alpha(A) \hookrightarrow W^{n,q}(\mathbb{R}^d)
 \]
\ro{involve the sequences $a^{(q)}$ or $a_+^{(q)}$ respectively, whose elements are given by}
\[
 \ro{|{\rm det}(A)|^{j(1/p-1/q-\alpha)} \cdot (1+\| A^j \|^n)  = |{\rm det}(A)|^{-jn^*} + |{\rm det}(A)|^{-jn^*} \| A^j \|^n}
\] \ro{in terms of $n^{\ast}$}. Clearly, \ro{summability of this sequence (to a given exponent)} is equivalent to separate summability of the two terms on the right. 

Since $A$ is expansive, both $|{\rm det}(A)|^j$ and $\| A^j \|$ grow exponentially as $j \to \infty$, and decay exponentially for $j \to -\infty$. In particular, $a^{(q)}$ is bounded iff it is constant, and this holds iff $n=n^* = 0$. 

\ro{For $a_+^{(q)}$ (i.e. in the inhomogeneous case),} summability requires a compensation between the two factors, \ro{which will allow us to establish a relationship between $n$ and $n^{\ast}$. This will be discussed in Section 4.} 
\end{remark}

The following theorem summarizes the embedding statements for the homogeneous case:
\begin{theorem} \label{thm:embed_hom}
 \begin{enumerate}
  \item[(a)] Assume that 
  \[ \dot{B}_{p,r}^\alpha(A) \hookrightarrow W^{n,q}(\mathbb{R}^d)~
   \] holds. Then the following relations exist between the various parameters:
   \begin{enumerate}
   \item[(i)]  $n=n^*=0$;
   \item[(ii)]$p \le q$ and $r \le q$;
   \item[(iii)] If $q= \infty$, then $r \le 1$;
   \item[(iv)] If $p=q$, then $r \le 2$. 
   \end{enumerate}   
  \item[(b)] Assume that the following conditions are fulfilled: 
  \begin{enumerate}
  \item[(i)] $n=n^* = 0$;
  \item[(ii)] $p\le q$ and $r \le q^\nabla$;
  \end{enumerate}
  Then the embedding 
  \[ \dot{B}_{p,r}^\alpha(A) \hookrightarrow W^{n,q}(\mathbb{R}^d)~
   \] holds.
 \end{enumerate}

\end{theorem}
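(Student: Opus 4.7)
The plan is to reduce both parts directly to Theorem \ref{thm:FV_embed_Besov}, exploiting the key observation from Remark \ref{rem:recurrent_themes} that boundedness of the two-sided sequence $a^{(q)}=(a_j^{(q)})_{j\in\Z}$ forces $n=n^*=0$, so that $a^{(q)}$ collapses to the constant sequence equal to $2$. The engine behind everything in (a) and (b) is then the trivial fact that a strictly positive constant sequence on a countably infinite index set lies in $\ell^s$ if and only if $s=\infty$, together with the conventions $1/\infty=0$ and $s'=\infty$ for $s\le 1$.

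For part (a), I start from the necessary conditions in Theorem \ref{thm:FV_embed_Besov}(a). Since $a^{(q)}\in\ell^{q(r/q)'}(\Z)\subseteq\ell^\infty(\Z)$, Remark \ref{rem:recurrent_themes} immediately yields (i), and consequently $a_j^{(q)}\equiv 2$. For (ii), the inequality $p\le q$ is stated directly in Theorem \ref{thm:FV_embed_Besov}(a); and for $q<\infty$, the constraint $a^{(q)}\in\ell^{q(r/q)'}(\Z)$ combined with the constancy of $a^{(q)}$ forces $q(r/q)'=\infty$, hence $r\le q$. For the remaining case $q=\infty$, I use the auxiliary necessary condition $a^{(q)}\in\ell^{r'}(\Z)$, which forces $r'=\infty$ and hence $r\le 1$; this simultaneously yields (iii) and the outstanding subcase of (ii). Finally, for (iv), observe that $p=q$ together with $n^*=0$ from (i) forces $\alpha=0$, and therefore $a_j^{(p)}\equiv 2$ on $\N_0$; the auxiliary condition $a_+^{(p)}\in\ell^{2(r/2)'}(\N_0)$ then forces $2(r/2)'=\infty$, i.e.\ $r\le 2$.

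For part (b), I apply the sufficient direction of Theorem \ref{thm:FV_embed_Besov}(a). The hypothesis $p\le q$ is immediate. Under $n=n^*=0$ the sequence $a^{(q)}$ is identically $2$, so the required summability $a^{(q)}\in\ell^{q^\nabla(r/q^\nabla)'}(\Z)$ reduces to the question whether $q^\nabla(r/q^\nabla)'=\infty$. Since $r\le q^\nabla$ gives $(r/q^\nabla)'=\infty$, and $q^\nabla>0$, the product is indeed $\infty$, and the condition holds trivially; hence the sufficient criterion applies and the embedding follows.

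The main (very mild) obstacle is nothing more than careful bookkeeping of the extended conventions for conjugate exponents in the quasi-Banach range, and of which expression involving conjugate exponents reduces to $\infty$ under which hypothesis on $r$ and $q$; no analytic machinery beyond Theorem \ref{thm:FV_embed_Besov} and Remark \ref{rem:recurrent_themes} enters the argument.
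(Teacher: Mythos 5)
Your proposal is correct and follows essentially the same route as the paper: reduce to Theorem \ref{thm:FV_embed_Besov} via Remark \ref{rem:recurrent_themes}, note that boundedness of $a^{(q)}$ forces $n=n^*=0$ and hence a constant sequence, and then read off the exponent constraints from the convention that a nonzero constant sequence lies in $\ell^s$ only for $s=\infty$. The only bookkeeping point is that the auxiliary condition $a_+^{(p)}\in\ell^{2(r/2)'}(\N_0)$ used for (iv) is only listed in Theorem \ref{thm:FV_embed_Besov}(a) for $q<\infty$, so you should note (as the paper does) that the case $p=q=\infty$ is already covered by (iii), since $r\le 1\le 2$.
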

\begin{proof}
 For the proof of (a), we first recall $p \le q$ as a necessary condition from Theorem \ref{thm:FV_embed_Besov}. We further have the necessary condition $a^{(q)} \in \ell^{q\cdot (\nicefrac{r}{q})'}(\mathbb{Z})$, which in particular requires boundedness of the sequence. But we have already noted in Remark \ref{rem:recurrent_themes} that boundedness is equivalent to $n=n^*=0$.
 
 The conditions derived so far entail that the sequence $a^{(q)}$ is in fact constant (and nonzero), and 
 under the condition $r>q$ we have $q (\nicefrac{r}{q})' < \infty$, contradicting $a^{(q)} \in \ell^{q\cdot (\nicefrac{r}{q})'}(\mathbb{Z})$. Thus $r \le q$ follows, and (i) and (ii) are shown. 

 If $q=\infty$, Theorem \ref{thm:FV_embed_Besov} notes the additional condition  $a^{(q)} \in \ell^{r'}(\mathbb{Z})$. Since we already know that this sequence is constant, this implies $r'= \infty$, or equivalently, $r \le 1$. 
 
 The remaining case for the proof of part (iv) is $p=q < \infty$. 
 Here we recall the necessary condition $a^{(p)}_+ \in \ell^{2 \cdot(r/2)'}(\mathbb{N}_0)$ from Theorem \ref{thm:FV_embed_Besov}(a). Under the assumptions $p=q$ and \ro{(by (i))} $n=n^*=0$, we have that $a^{(p)}_+$ is a nonzero, constant sequence. This implies $2 \cdot(r/2)' = \infty$, i.e. $r \le 2$. 

Finally, under the assumptions of (b), the sequence $a^{(q)}$ is constant, and $q^{\nabla}\cdot(\nicefrac{r}{q^{\nabla}}) = \infty$. Thus the sufficiency statement from Theorem \ref{thm:FV_embed_Besov} (a) applies. 
\end{proof}

\begin{remark}
Neither the sufficient nor the necessary conditions make any distinction between different expansive matrices, even though the associated scales of anisotropic Besov spaces are distinct whenever the associated expansive normal forms differ. 

This is not a byproduct of our approach to the characterization embeddings; it rather reflects the actual embedding behaviour of the involved spaces. Observe that the Theorem provides a sharp characterization for $q \in (0,2] \cup \{ \infty \}$. 
\end{remark}

\section{Embeddings of inhomogeneous Besov spaces}

The case of inhomogeneous Besov spaces is more complex, with a significant dependence on the matrix $A$. 
The following condition on $A$ will play an important role. 

\begin{definition}
 Let $A$ denote a matrix in expansive Jordan normal form, with maximal eigenvalue 
 \[
  \lambda_{max} = \max \{ \lambda~: \lambda \mbox{ eigenvalue of } A \}~. 
 \] We call $A$ \textbf{asymptotically norm diagonal} if the algebraic and geometric multiplicities of $\lambda_{max}$ coincide.
\end{definition}

\begin{remark} \label{rem:AND}
 The nomenclature of the previous definition is motivated by the observation that for any AND matrix $A$ in expansive Jordan normal form, and $j$ sufficiently large, \ro{one} has \linebreak$\| A^j \| = \lambda_{\max}^j$. To see this, we view $A$ as block diagonal matrix with $A_1$ denoting the Jordan block associated to $\lambda_{\max}$, and $A_2$ the remainder.\\
 We then note $\| A_2^j \| < \|A_1^j\ro{\|}$ and thus $\| A^j \| = \| A_1^j \| = \lambda_{\max}^j$ for sufficiently large $j$. Hence the norm of $A^j$ coincides with the norm of $B^j$, where $B$ is the diagonal matrix with the same eigenvalues as $A$. 
\end{remark}

We first note \ro{the} necessary conditions for the embedding\ro{, most of which} are independent of the AND property.
%

\begin{theorem}  \label{thm:embed_inhom_nec_ii}
 Let $A$ denote an expansive matrix. Assume that \[ B_{p,r}^\alpha(A) \hookrightarrow W^{n,q}(\mathbb{R}^d)~
   \] holds. Let 
   \[
    \lambda_{max} = \max \{ |\lambda|~:~ \lambda \mbox{ is an eigenvalue of } A \}~.
   \]
Then the following relations exist between the various parameters:
\begin{enumerate}
 \ro{\item[(a)] $p\leq q$}
 \item[(b)] $n \le \ro{\frac{\ln (|{\rm det}(A)|)}{\ln (\lambda_{max})}  \cdot n^* }$ \ro{(and in particular $n^{\ast}\geq 0$)}.
 \item[(c)] In the case $n = \ro{\frac{\ln (|{\rm det}(A)|)}{\ln (\lambda_{max})} \cdot n^*}$ further restrictions follow:
 \begin{itemize}
  \item[(i)] $r \le q$;
  \item[(ii)]  If $q = \infty$, then $r \le 1$.
  \item[(iii)] If $p=q$, then $r \le 2$. 
  \item[(iv)] If $A$ is not an AND matrix, then $n=n^*=0$.
 \end{itemize}
\end{enumerate}
\end{theorem}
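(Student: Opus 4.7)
\emph{Paragraph 1 (setup).} By the Corollary preceding this theorem I may assume that $A$ is in expansive Jordan normal form. Standard properties of powers of Jordan blocks, together with Remark \ref{rem:AND}, then give the asymptotic equivalence $\|A^j\|\asymp j^{k-1}\lambda_{\max}^{j}$ as $j\to\infty$, where $k\ge 1$ denotes the size of the largest Jordan block attached to $\lambda_{\max}$, and $k=1$ characterises the AND case. In particular $\|A^j\|\ge\lambda_{\max}^{j}$ for all $j\ge 0$, obtained by applying $A^j$ to an eigenvector of $\lambda_{\max}$. Rewriting the sequence from Theorem \ref{thm:FV_embed_Besov} via $n^{*}$ as in Remark \ref{rem:recurrent_themes},
\[
 a_j^{(q)} = |\det A|^{-jn^{*}} + |\det A|^{-jn^{*}}\|A^j\|^{n},
\]
shows that its tail behaviour is controlled by the exponential rate $\rho := n\ln\lambda_{\max} - n^{*}\ln|\det A|$ together with a polynomial factor $j^{n(k-1)}$.

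\emph{Paragraph 2 (parts (a) and (b)).} Statement (a) is read off directly from the necessary conditions in Theorem \ref{thm:FV_embed_Besov}(b). For (b), that theorem also gives $a_+^{(q)}\in\ell^{q\cdot(r/q)'}(\N)$; since the target space is a sequence space with positive exponent, membership forces boundedness of $a_+^{(q)}$. Using $\|A^j\|\ge\lambda_{\max}^{j}$ inside the second summand yields $a_j^{(q)}\ge e^{j\rho}$, and boundedness as $j\to\infty$ is equivalent to $\rho\le 0$, which rearranges to $n\le \frac{\ln|\det A|}{\ln\lambda_{\max}}\,n^{*}$. Because $n\ge 0$ and the prefactor is strictly positive, this inequality also forces $n^{*}\ge 0$.

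\emph{Paragraph 3 (part (c)).} Assume now equality in (b), i.e.\ $\rho=0$. Then $|\det A|^{-jn^{*}}\|A^j\|^{n}\asymp j^{n(k-1)}$ as $j\to\infty$, so $a_+^{(q)}$ has polynomial growth of order $n(k-1)$. This immediately yields (iv): if $A$ is not AND, then $k\ge 2$, and boundedness of $a_+^{(q)}$ forces $n=0$; the equality then yields $n^{*}=0$ as well. In the remaining cases $n(k-1)=0$, so $a_j^{(q)}$ is bounded below by a positive constant for large $j$ (the second summand tends to $1$ in the AND case, both summands are identically $1$ when $n=n^{*}=0$). For (i) with $q<\infty$, the condition $a_+^{(q)}\in\ell^{q\cdot(r/q)'}(\N)$ together with the non-vanishing tail forces $q\cdot(r/q)'=\infty$, hence $r\le q$; the case $q=\infty$ of (i) is subsumed by (ii). For (ii) ($q=\infty$), the extra necessary condition $a_+^{(q)}\in\ell^{r'}(\N_0)$ combined with the same lower bound forces $r'=\infty$, i.e.\ $r\le 1$. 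For (iii) with $p=q<\infty$, one has $a_+^{(p)}=a_+^{(q)}$ (both reduce to $|\det A|^{-j\alpha}(1+\|A^j\|^{n})$ since $p=q$ gives $n^{*}=\alpha$), so the extra condition $a_+^{(p)}\in\ell^{2\cdot(r/2)'}(\N_0)$ forces $2\cdot(r/2)'=\infty$, i.e.\ $r\le 2$; the case $p=q=\infty$ is handled by (ii) since $r\le 1\le 2$.

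\emph{Paragraph 4 (main obstacle).} The only non-trivial analytic input is the precise asymptotic behaviour $\|A^j\|\asymp j^{k-1}\lambda_{\max}^{j}$ of a matrix in Jordan normal form; pinning down the exponent $k-1$ is what distinguishes the AND case from its complement and drives statement (c)(iv). After that, every step reduces to the elementary observation that a positive sequence which does not decay cannot lie in $\ell^{t}(\N_0)$ for any finite $t$, so the various summability exponents appearing in Theorem \ref{thm:FV_embed_Besov} must be forced to $\infty$, producing the listed restrictions on $r$.
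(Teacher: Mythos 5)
Your proposal is correct and follows essentially the same route as the paper: read off the necessary summability conditions from Theorem \ref{thm:FV_embed_Besov}(b), rewrite $a_j^{(q)}$ in terms of $n^{*}$, and use the lower bounds $\|A^j\|\ge \lambda_{\max}^{j}$ (resp.\ $\|A^j\|\ge c\,j\,\lambda_{\max}^{j}$ in the non-AND case) to force the relevant exponents to be $\infty$. The only cosmetic difference is that you invoke the two-sided asymptotic $\|A^j\|\asymp j^{k-1}\lambda_{\max}^{j}$, where the paper gets by with the one-sided estimates.
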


\begin{proof}
\ro{Part (a) is an immediate consequence of Theorem \ref{thm:FV_embed_Besov}.}\\
  \ro{For the remaining statements,} we have $\|A^j\|\geq\lambda_{\max}^j$ for all $j\in\N_{\ro{0}}$, hence
\begin{equation*}
a_j^{(q)} \geq|\det(A)|^{\ro{-}jn^*}+|\det(A)|^{\ro{-}jn^*}\cdot\lambda_{\max}^{jn}
\end{equation*}
for all $j \ge \ro{0}$. The second term can be rewritten as 
\begin{equation} \label{eqn:sec_term}
 {\lambda_{\max}}^{\ro{-}\log_{\lambda_{\max}}(|\det(A)|)\cdot jn^*}\cdot\lambda_{\max}^{jn} = 
 {\lambda_{\max}}^{j\left(\ro{n-\frac{\ln(|\det(A)|)}{\ln(\lambda_{\max})}\cdot n^*}\right)}
\end{equation}
These terms can only be bounded if the exponents are not positive, i.e. if 
\begin{equation*}
n\leq \ro{\frac{\ln(|\det(A)|)}{\ln(\lambda_{\max})}\cdot n^*}~.
\end{equation*}
holds. This proves \ro{(b)}. 

Assuming \ro{equality, we obtain}
\begin{equation*}
a_j^{(q)} \geq|\det(A)|^{\ro{-}jn^*}+1~,
\end{equation*}
which excludes $s$-summability for any $0<s<\infty$. Just as in the proof of Theorem \ref{thm:embed_hom}, the necessary conditions from Theorem \ref{thm:FV_embed_Besov}(b) now imply the requirements (i) \ro{and (ii). Additionally assuming $p=q$, analogously allows us to estimate $a_j^{(p)}\geq |\det(A)|^{-jn^{\ast}}+1$, which entails $r\leq 2$, again by Theorem \ref{thm:FV_embed_Besov}(b).}

Finally, assume that $n = \ro{\frac{\ln (|{\rm det}(A)|)}{\ln (\lambda_{max})} \cdot n^*}$ and that $A$ is not an AND matrix. We write $A$ as block diagonal matrix with blocks $A_1,A_2$, where $A_1$ is the Jordan block associated to $\lambda_{max}$ and $A_2$ contains the remaining Jordan blocks. By assumption on $A$, \ro{there exists $c>0$}, such that
\[
 \| A^j \| \ro{{}\geq{}} \| A_1^j \| \ge cj \lambda_{\max}^j
\] \ro{for all $j\in\N_0$,} where the additional factor $j$ is supplied by the $j$th power of the superdiagonal part of $A_1$\ro{, when using $\|\cdot\|_{\infty}$ instead of $\|\cdot\|$, and the constant $c$ is obtained by switching back to $\|\cdot\|$.}
In this setting, the assumption $n = \ro{\frac{\ln (|{\rm det}(A)|)}{\ln (\lambda_{max})} \cdot n^*}$ allows to conclude via (\ref{eqn:sec_term}) that 
\[
 a_j^{(q)} \ge |\det(A)|^{\ro{-}jn^*} \| A^j \|^n \ge c^nj^n {\lambda_{\max}}^{j\left(\ro{n-\frac{\ln(|\det(A)|)}{\ln(\lambda_{\max})}\cdot n^*}\right)}  = j^n ~,
\] which is only bounded for $n=0$. This also entails $n^*=0$.
\end{proof}

\ro{As for the sufficient conditions, the following theorem shows that the assumption\linebreak $n<\frac{\ln(|\det(A)|)}{\ln(\lambda_{max})}\cdot n^{\ast}$ is in fact strong enough to guarantee the desired embedding.\\ When $n=\frac{\ln(|\det(A)|)}{\ln(\lambda_{max})}\cdot n^{\ast}$ we will once more have to distinguish between AND and non-AND generators.}

\begin{theorem}
 \label{thm:embed_inhom_suff}
  Let $A$ denote a matrix in expansive Jordan normal form. 
 Assume that the assumption
  \begin{enumerate}
  \item[(a)] $p \le q$;
  \end{enumerate}
  as well as either of the following conditions hold:
  \begin{enumerate}
  \item[(b)] $n <  \ro{\frac{\ln (|{\rm det}(A)|)}{\ln (\lambda_{max})}  \cdot n^*}$.
  \ro{\item[(b')] $n=n^{\ast}=0$, and $r\leq q^{\nabla}$}
  \item[(b'')] $A$ is an AND matrix, $n = \ro{\frac{\ln (|{\rm det}(A)|)}{\ln (\lambda_{max})}  \cdot n^*}$, and $r \le q^\nabla$. 
  \end{enumerate}
  Then the embedding 
  \[ B_{p,r}^\alpha(A) \hookrightarrow W^{n,q}(\mathbb{R}^d)~
   \] holds.
\end{theorem}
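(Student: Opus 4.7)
The plan is to verify, in each of the three cases (b), (b'), (b''), the sufficient condition of Theorem \ref{thm:FV_embed_Besov}(b). Assumption (a) already supplies $p\le q$, so what remains is to show
\begin{equation*}
  a_+^{(q)} \in \ell^{s}(\N_0), \qquad s := q^{\nabla}\cdot (r/q^{\nabla})'.
\end{equation*}
Written in terms of $n^*$, the sequence factors as
\begin{equation*}
  a_j^{(q)} = |\det(A)|^{-jn^*}\bigl(1+\|A^j\|^n\bigr),
\end{equation*}
so everything rests on a good bound for $\|A^j\|$. Since $A$ is in expansive Jordan normal form, one has the standard estimate $\|A^j\|\le C(1+j)^{d-1}\lambda_{\max}^j$ for $j\ge 0$. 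Setting $\rho:=\ln|\det(A)|/\ln(\lambda_{\max})>0$, so that $|\det(A)|=\lambda_{\max}^{\rho}$, yields the key inequality
\begin{equation*}
  a_j^{(q)} \le |\det(A)|^{-jn^*} + C'(1+j)^{n(d-1)}\lambda_{\max}^{j(n-\rho n^*)}.
\end{equation*}

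In case (b), the strict inequality $n<\rho n^*$ (together with $n\ge 0$ and $\rho>0$) forces $n^*>0$, so both terms decay exponentially in $j$, and hence $a_+^{(q)}\in\ell^{s}(\N_0)$ for every $0<s\le\infty$. In case (b'), $a_j^{(q)}\equiv 2$, so summability is possible only for $s=\infty$, which is equivalent to $r\le q^{\nabla}$; this is precisely the hypothesis. In case (b''), Remark \ref{rem:AND} provides the refined estimate $\|A^j\|=\lambda_{\max}^j$ for all sufficiently large $j$, so with $n=\rho n^*$ the quantity $|\det(A)|^{-jn^*}\|A^j\|^n$ is eventually identically $1$; combined with $n^*\ge 0$ (forced by $n\ge 0$ and $\rho>0$), this shows that $a_+^{(q)}$ is bounded, and summability again reduces, exactly as in case (b'), to the hypothesis $r\le q^{\nabla}$.

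The one delicate point is case (b''): we sit exactly on the boundary $n=\rho n^*$, where any extra polynomial factor in the growth of $\|A^j\|$ would ruin boundedness of $a_+^{(q)}$. It is precisely the AND hypothesis that eliminates the polynomial prefactor $(1+j)^{n(d-1)}$ appearing in the key inequality above — and the failure of AND is exactly what drives the obstruction recorded in part (c)(iv) of Theorem \ref{thm:embed_inhom_nec_ii}. The strictly subcritical case (b) and the degenerate case (b') are insensitive to this finer structural information on $A$ and follow from the same inequality by routine estimates.
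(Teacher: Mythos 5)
Your proof is correct and follows essentially the same route as the paper: reduce to the sufficient condition $a_+^{(q)}\in\ell^{q^{\nabla}\cdot(r/q^{\nabla})'}$ of Theorem \ref{thm:FV_embed_Besov}(b), then treat the three cases by estimating $\|A^j\|$, with case (b'') resting on Remark \ref{rem:AND}. The only cosmetic difference is that in case (b) you absorb the subcritical slack via the polynomial bound $\|A^j\|\le C(1+j)^{d-1}\lambda_{\max}^j$, whereas the paper replaces $\lambda_{\max}$ by a slightly larger $\lambda_+$; both are standard and equivalent here.
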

\begin{proof}
 By Theorem \ref{thm:FV_embed_Besov}(b), it suffices to check that (b)\ro{, (b') or (b'')} guarantee\linebreak  $a_+^{(q)} \in \ell^{q^{\nabla}\cdot(\nicefrac{r}{q^{\nabla}})^{\prime}}(\mathbb{N}_{\ro{0}})$, where
 \begin{equation} \label{eqn:ajq}
  a_j^{(q)} = |{\rm det}(A)|^{\ro{-}j n^*} + \ro{|\det(A)|^{-jn^{\ast}}\cdot\|A^j\|^n}
 \end{equation}
 
 For the sufficiency of \ro{(b)} pick $\lambda_+>\lambda_{\max}$ sufficiently close to ensure 
\begin{equation*}
n<\frac{\ln(|\det(A)|)}{\ln(\lambda_+)}\cdot n^*~.
\end{equation*}
As a consequence\ro{,} we find $\| A^j \| \le  c \lambda_+^j$ for all $j \ge 0$ with suitable choice of positive constant $c\ro{{}>0}$\ro{, (which is a standard result, proved e.g. in \cite{Bow03})}. Combining this with (\ref{eqn:ajq}) leads to the estimate 
 \begin{equation}\label{eqn:ajq+}
a_j^{(q)} \leq |\det(A)|^{\ro{-}jn^*}+c^n{\lambda_+}^{j\left(\ro{n-\frac{\ln(|\det(A)|)}{\ln(\lambda_+)}\cdot n^*}\right)}~\ro{.}
\end{equation}
 Since $0 \le n < \ro{\frac{\ln (|{\rm det}(A)|)}{\ln (\lambda_{+})}  \cdot n^*}$, we also get $n^*> 0$, i.e. both exponents in (\ref{eqn:ajq}) are strictly negative multiples of $j$. But this implies $s$-summability for any $0 < s \le \infty$, in particular 
 $a_+^{(q)} \in \ell^{q^{\nabla}\cdot(\nicefrac{r}{q^{\nabla}})^{\prime}}(\mathbb{N})$.
 
 \ro{When $n=n^{\ast}=0$, $a_+^{(q)}$ is constant by (\ref{eqn:ajq}), which proves the suffiency of (b'), since $r\leq q^{\nabla}$ implies $q^{\nabla}\cdot\left(\nicefrac{r}{q^{\nabla}}\right)=\infty$.}
 
 Finally, \ro{if $A$ is an AND matrix, to prove the sufficiency of (b''), we can rewrite (\ref{eqn:ajq}) as}
\begin{equation*}
	\ro{a_j^{(q)}=\det(A)|^{jn^*}+c^n{\lambda_+}^{j\left(n-\frac{\ln(|\det(A)|)}{\ln(\lambda_{max})}\cdot n^*\right)}~.}
\end{equation*}
 \ro{Under the additional assumption that $n=\frac{\ln (|{\rm det}(A)|)}{\ln (\lambda_{max})}  \cdot n^*$,} the second term is constant, and the exponent in the first term is a nonpositive multiple of $j$, in particular bounded. This implies $a_+^{(q)} \in \ell^\infty(\mathbb{N}) =  \ell^{q^{\nabla}\cdot(\nicefrac{r}{q^{\nabla}})^{\prime}}(\mathbb{N})$, whenever $r \le q^\nabla$. 
\end{proof}

The following corollary identifies a large class of cases for which the characterization of the embedding is sharp; in addition to the already noted cases $q\in(0,2]\cup\{\infty\}$.

\begin{corollary}
 Assume that \ro{either of the following conditions hold:}
  \begin{enumerate}
  \ro{\item[a)] $\frac{\ln (|{\rm det}(A)|)}{\ln (\lambda_{max})}  \cdot n^*\notin\N_0$};
  \ro{\item[b)] $A$ is not AND and $n^{\ast}\neq 0$.}
  \end{enumerate}
  Then
  \[ B_{p,r}^\alpha(A) \hookrightarrow W^{n,q}(\mathbb{R}^d)~
   \]  is equivalent to the conditions 
   \begin{enumerate}
    \item $p \le q$;
    \item $n < \ro{\frac{\ln (|{\rm det}(A)|)}{\ln (\lambda_{max})}  \cdot n^*}$.
   \end{enumerate}

\end{corollary}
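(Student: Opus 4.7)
The plan is to obtain this corollary as a direct consequence of Theorems \ref{thm:embed_inhom_nec_ii} and \ref{thm:embed_inhom_suff}, with essentially no extra work beyond ruling out the borderline equality case in the necessary conditions.

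For the sufficiency direction, I note that the two listed conditions $p \le q$ and $n < \frac{\ln(|\det(A)|)}{\ln(\lambda_{max})} \cdot n^*$ are exactly the hypotheses (a) and (b) of Theorem \ref{thm:embed_inhom_suff}, so the embedding $B_{p,r}^\alpha(A) \hookrightarrow W^{n,q}(\R^d)$ follows immediately, independently of whether $A$ is AND or not and independent of assumption (a) or (b) of the corollary. (Note that Theorem \ref{thm:embed_inhom_suff}(b) implicitly forces $n^{\ast} > 0$, since $n \ge 0$ and the factor $\frac{\ln(|\det(A)|)}{\ln(\lambda_{max})}$ is positive.)

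For necessity, Theorem \ref{thm:embed_inhom_nec_ii}(a) yields $p \le q$, while Theorem \ref{thm:embed_inhom_nec_ii}(b) yields the weak inequality $n \le \frac{\ln(|\det(A)|)}{\ln(\lambda_{max})} \cdot n^*$. The only thing left is to exclude equality, and this is where assumptions (a) and (b) of the corollary come in. If assumption (a) of the corollary holds, then $\frac{\ln(|\det(A)|)}{\ln(\lambda_{max})} \cdot n^* \notin \N_0$; since $n \in \N_0$, equality is impossible and the strict inequality follows. If instead assumption (b) holds, suppose for contradiction that we had equality $n = \frac{\ln(|\det(A)|)}{\ln(\lambda_{max})} \cdot n^*$. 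Then Theorem \ref{thm:embed_inhom_nec_ii}(c)(iv) (applicable precisely because $A$ is not AND) would force $n = n^* = 0$, contradicting the assumption $n^* \neq 0$. Hence strict inequality again holds.

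There is no real obstacle here; the corollary is essentially a bookkeeping statement that identifies the parameter regimes in which the gap between the necessary condition $n \le \frac{\ln(|\det(A)|)}{\ln(\lambda_{max})} \cdot n^*$ and the sufficient condition $n < \frac{\ln(|\det(A)|)}{\ln(\lambda_{max})} \cdot n^*$ closes automatically. The only mildly subtle point to highlight in the writeup is the observation that the endpoint case $n = \frac{\ln(|\det(A)|)}{\ln(\lambda_{max})} \cdot n^*$ is excluded on purely arithmetic grounds in case (a) and via the non-AND refinement of Theorem \ref{thm:embed_inhom_nec_ii}(c) in case (b); everything else is an immediate invocation of the two preceding theorems.
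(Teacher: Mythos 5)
Your proof is correct and is exactly the intended argument: the paper states this corollary without proof as an immediate consequence of Theorems \ref{thm:embed_inhom_nec_ii} and \ref{thm:embed_inhom_suff}, and your bookkeeping (sufficiency from hypothesis (b) of the sufficiency theorem, necessity from parts (a) and (b) of the necessity theorem, with the equality case excluded arithmetically under assumption a) and via part (c)(iv) under assumption b)) is precisely how the gap closes.
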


We next give a remark \ro{summarizing the different behaviour of AND and non-AND matrices.}
\begin{remark}
 Let expansive matrices $A,B$ be given with the same eigenvalues, where $A$ is an AND matrix, and $B$ is not. In the interest of briefness, we call an embedding statement
 \[
  B_{p,r}^\alpha(A) \hookrightarrow W^{n,q}(\mathbb{R}^d)
 \] \textbf{decidable}, for a particular choice of parameters $p,r,q,\alpha,n$ if the criteria derived in this paper provide a definite answer. We can then make the following observations:
 \begin{itemize}
  \item If $B_{p,r}^\alpha(A) \hookrightarrow W^{n,q}(\mathbb{R}^d)$ is decidable, then also the statement $B_{p,r}^\alpha(B) \hookrightarrow W^{n,q}(\mathbb{R}^d)$. 
   \item If $B_{p,r}^\alpha(A) \hookrightarrow W^{n,q}(\mathbb{R}^d)$ is decidable and false, the same follows for $B_{p,r}^\alpha(B) \hookrightarrow W^{n,q}(\mathbb{R}^d)$.
  \item If $B_{p,r}^\alpha(A) \hookrightarrow W^{n,q}(\mathbb{R}^d)$ is not decidable, then $B_{p,r}^\alpha(B) \hookrightarrow W^{n,q}(\mathbb{R}^d)$ is either false or not decidable.
  \end{itemize}
We now give a concise example showing that each of the \ro{remaining combinations} may occur. Let
\begin{equation*}
A=\begin{pmatrix} \sqrt{2}&0\\0&\sqrt{2}\end{pmatrix}\quad\text\quad B=\begin{pmatrix} \sqrt{2}&1\\0&\sqrt{2}\end{pmatrix}.
\end{equation*}
We fix $p:=2$, $q:=3>p$ and $\alpha_1:=\frac{5}{3}\geq\frac{1}{6}=\frac{1}{2}-\frac{1}{3}=\frac{1}{p}-\frac{1}{q}=:\alpha_{2}$. Now $\frac{1}{3}+\frac{2}{3}=1$ implies $q^{\nabla}=q^{\prime}=\frac{3}{2}$.

In addition we have 
\begin{equation*}
\frac{\ln(|\det(A)|)}{\ln(\lambda_{\max})}\cdot\left(\alpha_1+\frac{1}{q}-\frac{1}{p}\right) = 2\cdot\left(\frac{5}{3}+\frac{1}{3}-\frac{1}{2}\right)=3.
\end{equation*}
We now apply the criteria for different values of $n$ and $r$:
\begin{enumerate}
\item[(a)] Let $n\in\{0,1,2\}$. Then our criteria yield $B_{p,r}^{\alpha_1}(A)\hookrightarrow W^{n,q}(\R^d)$ \ro{a}nd\linebreak $B_{p,r}^{\alpha_1}(B)\hookrightarrow W^{n,q}(\R^d)$ for all $r>0$.
\item[(b)] Let $n\in\N$ with $n>3$. Then our criteria yield $B_{p,r}^{\alpha_1}(A)\not\hookrightarrow W^{n,q}(\R^d)$ and\linebreak $B_{p,r}^{\alpha_1}(B)\not\hookrightarrow W^{n,q}(\R^d)$, for all $r>0$.
\item[(c)] Fixing $n=3$ and $r=1<\frac{3}{2}$ leads to $B_{p,r}^{\alpha_1}(A)\hookrightarrow W^{n,q}(\R^d)$ and \linebreak$B_{p,r}^{\alpha_1}(B)\not\hookrightarrow W^{n,q}(\R^d)$
\item[(d)] Let $n=3$ and $r=2\in\left(\frac{3}{2},3]\right)$. Then $B_{p,r}^{\alpha_1}(A)\hookrightarrow W^{n,q}(\R^d)$ is not decidable, on the other hand $B_{p,r}^{\alpha_1}(A)\not\hookrightarrow W^{n,q}(\R^d)$ is true and decidable. 
\item[(e)] In the case of $n=0$ and $r=2\in\left(\frac{3}{2},3]\right)$ neither $B_{p,r}^{\alpha_2}(A)\hookrightarrow W^{n,q}(\R^d)$ nor\linebreak $B_{p,r}^{\alpha_2}(B)\hookrightarrow W^{n,q}(\R^d)$ are decidable.
\end{enumerate}
\end{remark}

\begin{remark} \label{rem:summary}
 Let us now briefly summarize the impact of the matrix $A$ on the embedding statements
   \[ B_{p,r}^\alpha(A) \hookrightarrow W^{n,q}(\mathbb{R}^d)~,
   \] 
where we assume $A$ in expansive Jordan normal form. We focus on the interpretation of $\alpha \in \mathbb{R}$ as a smoothness parameter.
 With the remaining parameters $n,p,q,r$ and $A$ fixed, and under the condition $p\le q$, the decisive inequality 
 \begin{equation} \label{eqn:deg_smooth}
  n \le \frac{\ln (|{\rm det}(A)|)}{\ln (\lambda_{max})}  \cdot \left( \alpha + \frac{1}{q} - \frac{1}{p} \right) 
 \end{equation}
 shows that increasing $\alpha$ by one allows to increase the smoothness parameter $n$ by 
\[
 a = \frac{\ln (|{\rm det}(A)|)}{\ln (\lambda_{max})} \in (1,d]~.
\] This quantity can be interpreted as a \textbf{degree of isotropy}, with the extreme values $1$ and $d$ corresponding to the anisotropic and isotropic ends of the scale. 

These considerations first of all show that the interpretation of $\alpha$ as a smoothness parameter is justified in the setting of inhomogeneous anisotropic Besov spaces as well, and that the gain in smoothness is quantified precisely by the degree of isotropy. 

We furthermore observe that the degree of isotropy only depends on the eigenvalues of $A$. The Jordan block structure of $A$ only has an influence on the AND property, which is of a rather secondary nature: It occurs only as part of additional conditions needed to decide the embedding statement in the exceptional cases when equality holds in (\ref{eqn:deg_smooth}).  Note also that only the Jordan block corresponding to the largest eigenvalue is relevant for this property. 

We finally return to the comparison of the embedding behaviours of homogeneous and inhomogeneous Besov spaces. Recall that the former shows no dependence on the expansive matrix whatsoever, whereas the inhomogeneous spaces exhibit a distinctly more nuanced embedding behaviour. This is in marked contrast to the diversity of the Besov scales themselves: By Corollary 6.5 of \cite{MR4135424}, two expansive matrices $A,B$ that induce the same scales of homogeneous Besov spaces also induce the same scale of inhomogeneous spaces; and Remark 7.4 of the cited paper provides a concrete example that the converse is not true. Thus the scales of homogeneous Besov spaces are generally more diverse than the inhomogeneous ones, but the embedding behaviour does not reflect this. 
\end{remark}

\section*{Concluding remarks}
The main purpose of this paper was to elucidate the embedding behaviour of anisotropic Besov spaces into isotropic Sobolev spaces. While this is obviously a legitimate question in its own right, the relative ease with which it could be translated to rather elementary problems in numerical linear algebra demonstrates the power of the general embedding results in \cite{FV}. The generality and scope of these results is further emphasized by the fact that a similar analysis has recently been performed for a completely different class of Besov-type function spaces, the so-called \textit{shearlet coorbit spaces}; see \cite{FuKo_Embed}.

\bibliography{embedding_ai.bib}
\bibliographystyle{plain}
\end{document}